\newtheorem{thm}{Theorem}[section]
\newtheorem{corollary}[thm]{Corollary}
\newtheorem{lemma}[thm]{Lemma}
\newtheorem{claim}{Claim}
\theoremstyle{definition}
\newtheorem{example}[thm]{Example}
\theoremstyle{remark}
\newtheorem{remark}[thm]{Remark}
\DeclareMathOperator{\nil}{Nil}
\DeclareMathOperator{\Max}{Max}
\DeclareMathOperator{\qf}{qf}
\DeclareMathOperator{\Ker}{Ker}
\DeclareMathOperator{\Spec}{Spec}
\DeclareMathOperator{\wdim}{w.dim}
\DeclareMathOperator{\m}{\mathfrak{m}}
\DeclareMathOperator{\M}{\mathfrak{M}}
\DeclareMathOperator{\id}{id}
\newcommand{\field}[1]{\mathbb{#1}}
\newcommand{\Z }{\field{Z}}
\begin{document}
\baselineskip=15pt

\title[Bi-amalgamations subject to the arithmetical property]{Bi-amalgamations subject to the arithmetical property $^{(\star)}$}
\thanks{$^{(\star)}$ Supported by King Fahd University of Petroleum \& Minerals  under Research Grant \# RG1310.}

\author[S. Kabbaj]{S. Kabbaj $^{(1)}$}\thanks{$^{(1)}$ Corresponding author.}
\address{Department of Mathematics and Statistics, King Fahd University of Petroleum \& Minerals, Dhahran 31261, KSA}
\email{kabbaj@kfupm.edu.sa}

\author[N. Mahdou]{N. Mahdou}
\address{Department of Mathematics, Faculty of Science and Technology, University S. M.
Ben Abdellah, Fez 30000, Morocco}
\email{mahdou@hotmail.com}

\author[M. A. S. Moutui]{M. A. S. Moutui}
\address{Department of Mathematics, Faculty of Science and Technology, University S. M.
Ben Abdellah, Fez 30000, Morocco}
\email{moutu\_2004@yahoo.fr}

\date{\today}

\subjclass[2010]{13F05, 13A15, 13E05, 13F20, 13C10, 13C11, 13F30, 13D05, 16D40, 16E10, 16E60}

\begin{abstract}
This paper establishes necessary and sufficient conditions for a bi-amalgamation to inherit the arithmetical property, with applications on the weak global dimension and transfer of the semihereditary property. The new results compare to previous works carried on various settings of duplications and amalgamations, and capitalize on recent results on bi-amalgamations. All results are backed with new and illustrative examples arising as bi-amalgamations.
\end{abstract}
\maketitle

\section{Introduction}

 Throughout, all rings considered are commutative with unity and all modules are unital. Let $f: A\rightarrow B$ and $g: A\rightarrow C$ be two ring homomorphisms and let $J$ and $J'$ be two ideals of $B$ and $C$, respectively, such that $I_{o}:=f^{-1}(J)=g^{-1}(J')$. The \emph{bi-amalgamation} (or \emph{bi-amalgamated algebra}) of $A$ with $(B, C)$ along $(J, J')$ with respect to $(f,g)$ is the subring of $B\times C$ given by
$$A\bowtie^{f,g}(J,J'):=\big\{(f(a)+j,g(a)+j') \mid a\in A, (j,j')\in J\times J'\big\}.$$
This construction was introduced in \cite{KLT} as a natural generalization of duplications \cite{CJKM,D,DF1,DF2,MY,Sh} and amalgamations \cite{DFF1,DFF2,Fin}. In \cite{KLT}, the authors provide original examples of bi-amalgamations and, in particular, show that Boisen-Sheldon's CPI-extensions \cite{BSh2}  can be viewed as bi-amalgamations (Notice that \cite[Example 2.7]{DFF1} shows that CPI-extensions can be viewed as quotient rings of amalgamated algebras). They also show how every bi-amalgamation can arise as a natural pullback (or even as a conductor square) and then characterize pullbacks that can arise as bi-amalgamations. This allowed them to characterize Traverso's glueings of prime ideals \cite{Ped,Tam,Tra,Yan} which can be viewed as special bi-amalgamations. Then, the last two sections deal, respectively, with the transfer of some basic ring theoretic properties to bi-amalgamations and the study of their prime ideal structures.  All their results recover known results on duplications and amalgamations. Finally, it is worthwile recalling that, very recently, Finocchiaro investigated necessary and sufficient conditions for an amalgamated algebra to inherit various Pr\"ufer conditions (including the arithmetical property) \cite{Fin}.

This paper studies the transfer of the arithmetical property and related notions to bi-amalgamations. A ring $R$ is \emph{arithmetical} if every finitely generated ideal is locally principal \cite{Fu,J};  and $R$ is \emph{semihereditary} if every finitely generated ideal is projective \cite{CE}. The following diagram of implications summarizes the relations between the main three notions involved in this paper:

 {
 \begin{center}
 \begin{tabular}{ccccc}
                        &           &$R$ is semihereditary      &           &\\
                        &           &$\Downarrow$               &           &\\
                        &           &$\wdim(R)\leq 1$           &           &\\
                        &           &$\Downarrow$               &           &\\
                        &           &$R$ is arithmetical        &           &

 \end{tabular}
 \end{center}}

\noindent where $\wdim(R)$ denotes the weak global dimension of $R$. Recall that all these properties are identical to the notion of Pr\"ufer domain if $R$ has no zero-divisors,
and that the above implications are irreversible, in general, as shown by examples provided in \cite{AJK,BKM,BG,BG2,Bo2007,Bo2008,Bo2011,CJKM,Cou,G2,G3,GS}. Very recently,
these conditions (among other Pr\"ufer conditions) were thoroughly investigated in various contexts of duplications \cite{CJKM}.

This paper establishes necessary and sufficient conditions for a bi-amalgamation to inherit the arithmetical property, with applications on the weak global dimension and transfer of the semihereditary property. Section 2 is devoted to the transfer results, the main of which (Theorem~\ref{arith}) states that "\emph{ $A\bowtie^{f,g}(J,J')$ is arithmetical if and only if both $f(A)+J$ and $g(A)+J'$ are arithmetical and, for every $\m\in\Max(A,I_{o})$, $J_{S_{\m}}=0$ or $J'_{S'_{\m}}=0$}". Follow several applications featuring the transfer of other related properties in bi-amalgamations, amalgamations, and duplications. All obtained results recover and compare to previous works carried on various settings of duplications and amalgamations, and capitalize on recent results on bi-amalgamations. (As to previous works on pullbacks, see Remark~\ref{arith-r1}.) In Section 3, all results are backed with new and illustrative examples arising as bi-amalgamations.

Notice, at this point, that in the presence of the equality $f^{-1}(J)=g^{-1}(J')$, $J=B$ if and only if $J'=C$; and in this case $A\bowtie^{f,g}(J,J')= B\times C$. Therefore, in this paper, we will omit this case (i.e., $J$ and $J'$ will always be proper) since it is known that the above Pr\"ufer notions are stable under finite products (cf. \cite[p. 251]{CJKM}).

Throughout, for a ring $R$, $\Spec(R)$ (resp., $\Max(R)$)  will denote the set of all prime (resp., maximal) ideals of $R$, and, for any ideal $I$ of $R$, $\Spec(R,I)$ (resp., $\Max(R,I)$) will denote the set of all prime (resp., maximal) ideals of $R$ containing $I$.

\section{Results}\label{sec:2}

\noindent Let $f: A\rightarrow B$ and $g: A\rightarrow C$ be two ring homomorphisms and let $J$ and $J'$ be two \emph{proper} ideals of $B$ and $C$, respectively, such that $I_{o}:=f^{-1}(J)=g^{-1}(J')$. All along this section, $A\bowtie^{f,g}(J,J')$ will denote the bi-amalgamation of $A$ with  $(B, C)$ along $(J, J')$ with respect to $(f,g)$. This section investigates the arithmetical and semihereditary properties  as well as the weak global dimension in bi-amalgamations.

The first main result establishes necessary and sufficient conditions for a bi-amalgamation to inherit the arithmetical property. To this purpose, let us adopt the following notation:\\
For any $p\in\Spec(A,I_{o})$ (resp., $\in\Max(A,I_{o})$), consider the multiplicative subsets
$$S_{p}:=f(A-p)+J\ \text{ and }\ S'_{p}:=g(A-p)+J'$$
of $B$ and $C$, respectively, and let
$$f_p: A_p\rightarrow B_{S_{p}}\ \text{ and }\ g_p: A_p\rightarrow C_{S'_{p}}$$
be the canonical ring homomorphisms induced by $f$ and $g$. One can easily check that
$$f_p^{-1}(J_{S_{p}})=g_p^{-1}(J'_{S'_{p}})=(I_{o})_p.$$ Moreover, by \cite[Lemma 5.1]{KLT}, $P:=p\bowtie^{f,g}(J,J')$ is a prime (resp., maximal) ideal of $A\bowtie^{f,g}(J,J')$ and, by \cite[Proposition 5.7]{KLT}, we have

$$\big(A\bowtie^{f,g}(J,J')\big)_{P}\cong A_p\bowtie^{f_p,g_p}(J_{S_{p}},J'_{S'_{p}}).$$
These facts will be used in the sequel without explicit reference. Recall that a local arithmetical ring is also called a chained ring (i.e., its lattice of ideals is totally ordered by inclusion).

\begin{thm}\label{arith}
Under the above notation, we have:
\begin{enumerate}
\item $A\bowtie^{f,g}(J,J')$ is a chained ring if and only if both $f(A)+J$ and $g(A)+J'$ are chained rings and $J=0$ or $J'=0$.
\item   $A\bowtie^{f,g}(J,J')$ is arithmetical if and only if both $f(A)+J$ and $g(A)+J'$ are arithmetical and, for every $\m\in\Max(A,I_{o})$, $J_{S_{\m}}=0$ or $J'_{S'_{\m}}=0$.
\end{enumerate}
\end{thm}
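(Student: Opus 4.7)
I will prove part (1) directly, and reduce part (2) to part (1) via the standard fact that $R := A\bowtie^{f,g}(J,J')$ is arithmetical iff $R_{\M}$ is chained at every maximal ideal $\M$ of $R$, after classifying such maximal ideals.

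For part (1), the sufficient direction hinges on the observation that, when (WLOG) $J'=0$, the hypothesis $I_{o}=g^{-1}(0)=\ker g$ makes the first projection $\pi_{1} : R \to f(A)+J$ injective (an equation $f(a)+j=0$ forces $f(a)\in J$, so $a\in I_{o}$ and $g(a)=0$), whence $R \cong f(A)+J$. For the necessary direction, $f(A)+J$ and $g(A)+J'$ are quotients of $R$ via the two coordinate projections and thereby inherit the chained property; and if one had $j\in J\setminus\{0\}$ and $j'\in J'\setminus\{0\}$ simultaneously, then $(j,0),(0,j')\in R$ (take $a=0$), yet any $R$-multiple of $(j,0)$ has second coordinate $0$ while any multiple of $(0,j')$ has first coordinate $0$, so neither principal ideal contains the other, contradicting the chained hypothesis.

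The forward direction of (2) is then immediate: $f(A)+J$ and $g(A)+J'$ are quotients of the arithmetical ring $R$ and hence arithmetical; and for every $\m\in\Max(A,I_{o})$, $P:=\m\bowtie^{f,g}(J,J')$ is maximal in $R$ by \cite[Lemma~5.1]{KLT}, so $R_{P}\cong A_{\m}\bowtie^{f_{\m},g_{\m}}(J_{S_{\m}},J'_{S'_{\m}})$ by \cite[Proposition~5.7]{KLT} is local and arithmetical, hence chained, and part (1) forces $J_{S_{\m}}=0$ or $J'_{S'_{\m}}=0$.

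The converse of (2) is the main work. I would stratify the maximal ideals $\M$ of $R$ using the ``orthogonal'' ideals $K_{1}:=J\times\{0\}$ and $K_{2}:=\{0\}\times J'$ of $R$ (both lie in $R$ via $a=0$, and $K_{1}K_{2}=0$, so every prime contains at least one of them). This yields three cases: \emph{(a)} $K_{1}+K_{2}=J\times J'\subseteq\M$, forcing $\M=\m\bowtie^{f,g}(J,J')$ for some $\m\in\Max(A,I_{o})$ via $R/(J\times J')\cong A/I_{o}$; \emph{(b)} $K_{2}\subseteq\M$ but $K_{1}\not\subseteq\M$; and \emph{(c)} the symmetric case. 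In case~(b), picking $x\in K_{1}\setminus\M$ shows that $x$ becomes a unit in $R_{\M}$ annihilating $K_{2}$, so $K_{2}R_{\M}=0$ and $R_{\M}\cong (R/K_{2})_{\bar{\M}}\cong (f(A)+J)_{\bar{\M}}$, chained by the arithmetical hypothesis on $f(A)+J$; case~(c) is symmetric via $g(A)+J'$. In case~(a), hypothesis~(iii) together with the isomorphism $\pi_{1}$ from part (1) give (WLOG) $R_{\M}\cong f_{\m}(A_{\m})+J_{S_{\m}}$. The main delicate step is therefore to identify this ring with the localization of $f(A)+J$ at its maximal ideal $f(\m)+J$---a standard but somewhat technical amalgamation-style computation---whereupon chainedness follows from the arithmetical hypothesis on $f(A)+J$.
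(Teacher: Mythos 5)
Your proposal is essentially correct and, in several places, genuinely more elementary than the paper's proof. For part (1) the paper does not argue directly: it invokes the pullback presentation of $A\bowtie^{f,g}(J,J')$ from \cite[Proposition 3.1]{KLT} and then quotes Finocchiaro's pullback criterion \cite[Proposition 4.9]{Fin}; your direct argument (injectivity of $\pi_1$ when $J'=0$ because $f(a)+j=0$ forces $a\in I_o=\ker g$, and the incomparable principal ideals $R(j,0)$, $R(0,j')$ for the necessity of $J=0$ or $J'=0$) is self-contained and correct. In the forward direction of (2), your observation that $f(A)+J\cong R/(0\times J')$ and $g(A)+J'\cong R/(J\times 0)$ inherit arithmeticity as quotients is a real simplification: the paper instead proves arithmeticity of $f(A)+J$ by a localization analysis that already requires its Claim~1. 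Your stratification of $\Max(R)$ by the orthogonal ideals $K_1=J\times 0$, $K_2=0\times J'$ with $K_1K_2=0$ parallels the paper's dichotomy ($J\times J'\subseteq\M$ or not), except that where the paper cites \cite[Proposition 5.3]{KLT} you give the direct unit-annihilates-$K_2$ argument.

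The one place where you have not actually done the work is the step you yourself flag as delicate: the identification $f_{\m}(A_{\m})+J_{S_{\m}}=(f(A)+J)_{f(\m)+J}$. This is precisely Claim~1 in the paper's proof, and it is not automatic; the paper's argument uses that $f_{\m}(A_{\m})+J_{S_{\m}}$ is local with maximal ideal $f_{\m}(\m A_{\m})+J_{S_{\m}}$ to show that every denominator $f(s)+j$ with $s\notin\m$ is already invertible there. You should either supply that argument or, better, notice that your own case~(b) mechanism disposes of case~(a) as well: when $J'_{S'_{\m}}=0$, every $(0,j')\in K_2$ is killed by some $t=(f(a),g(a)+k')\notin\M$ with $(g(a)+k')j'=0$, so $K_2R_{\M}=0$ and hence $R_{\M}\cong (R/K_2)_{\M/K_2}\cong (f(A)+J)_{f(\m)+J}$ directly, with no need for \cite[Proposition 5.7]{KLT} or the Claim~1 computation at all. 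With that step closed, your proof is complete.
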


\begin{proof} (1) By \cite[Proposition 3.1]{KLT}, $A\bowtie^{f,g}(J,J')$ arises as a pullback $D:=\alpha\times_{\frac{A}{I_{o}}}\beta$ with $\Ker(\alpha)=J$, $\Ker(\beta)=J'$, and $p_{1}(D)=f(A)+J$ (resp., $p_{2}(D)= g(A)+J'$) where $p_{1}$ (resp., $p_{2}$) denotes the restriction to $D$ of the projection of $(f(A)+J) \times (g(A)+J')$ into $f(A)+J$ (resp., $g(A)+J'$). Moreover, recall that the chained ring notion is stable under factor ring and $\frac{f(A)+J}{J}\cong \frac{g(A)+J'}{J'}$ \cite[Proposition 4.1(3)]{KLT}). Therefore, the result follows readily from \cite[Proposition 4.9]{Fin}.

(2) First note that (1) is the local version of (2). In order to see this, recall that $A\bowtie^{f,g}(J,J')$ is local if and only if both $f(A)+J$ and $g(A)+J'$ are local; and $\m\bowtie^{f,g}(J,J')$ is the maximal ideal of $A\bowtie^{f,g}(J,J')$, where $\m$ is the unique maximal ideal of $A$ containing $I_{o}$ \cite[Proposition 5.4]{KLT}. Moreover, in view of the isomorphism $$\dfrac{A\bowtie^{f,g}(J,J')}{0\times J'}\cong f(A)+J$$
given by \cite[Proposition 4.1(2)]{KLT}, we deduce that
$$\dfrac{\m\bowtie^{f,g}(J,J')}{0\times J'}\simeq f(\m)+J$$
is the maximal ideal of $f(A)+J$ and, similarly,  $g(\m)+J'$ is the maximal ideal of $g(A)+J'$. Hence $S_{\m}$ and $S'_{\m}$ shall consist of units only so that  $J_{S_{\m}}=J$ and $J'_{S'_{\m}}=J'$, as desired.

Now, assume that $A\bowtie^{f,g}(J,J')$ is arithmetical. Let $\m\in\Max(A,I_{o})$ and $\M:=\m\bowtie^{f,g}(J,J')$. Therefore $$A_{\m}\bowtie^{f_{\m},g_{\m}}(J_{S_{\m}},J'_{S'_{\m}})\cong \big(A\bowtie^{f,g}(J,J')\big)_{\M}$$
 is a chained ring  (since the arithmetical property is stable under localization). By (1),  $J_{S_{\m}}$ or $J'_{S'_{\m}}$ is null, as desired. Next, let $L\in\Spec(f(A)+J)$ and consider the prime ideal of $A\bowtie^{f,g}(J,J')$ given by
 $$\overline{L}:=\big(L\times (g(A)+J')\big)\cap\big(A\bowtie^{f,g}(J,J')\big).$$
 If $J\nsubseteq L$, then by \cite[Proposition 5.3(2)]{KLT}, $$(f(A)+J)_L\cong \left(A\bowtie^{f,g}(J,J')\right)_{\overline{L}}$$ is arithmetical. Next, assume that $J\subseteq L$. By \cite[Lemma 5.2]{KLT}, we have
 $$\overline{L}:=p\bowtie^{f,g}(J,J')$$
 where $p:=f^{-1}(L)\in\Spec(A,I_{o})$ and one can easily verify that $L=f(p)+J$. So,
 $$A_p\bowtie^{f_p,g_p}(J_{S_{p}},J'_{S'_{p}})\cong\big(A\bowtie^{f,g}(J,J')\big)_{\overline{L}}$$
  is a chained ring. By (1), $f_{p}(A_{p})+J_{S_{p}}$ is a chained ring and (as seen above) with maximal ideal $f_{p}(pA_{p})+J_{S_{p}}$. The fact that $f_{p}(A_{p})+J_{S_{p}}$ is local yields:
\begin{claim}\label{claim1}
$f_{p}(A_{p})+J_{S_{p}}=(f(A)+J)_L$.
\end{claim}
Indeed, first observe that
$$S_{p}=(f(A)+J)\setminus (f(p)+J)=(f(A)+J)\setminus L$$
and, hence, both  $f_{p}(A_{p})+J_{S_{p}}$ and $(f(A)+J)_L$ are subrings of $B_{S_{p}}$. The forward inclusion is obvious. To prove the other, let $x\in(f(A)+J)_L$; that is,
$$x=\dfrac{f(a)+i}{f(s)+j}=\left(\dfrac{1}{f(s)+j}\right)\left(\dfrac{f(a)}{1}\right)+\dfrac{i}{f(s)+j}$$
for some $a\in A$, $s\in A\setminus p$ and $i,j\in J$. Clearly, it suffices to show that $$\dfrac{1}{f(s)+j}\in f_{p}(A_{p})+J_{S_{p}}.$$ This is true since one can check that $$\dfrac{f(s)+j}{1}=\dfrac{f(s)}{1}+\dfrac{j}{1}\notin f_{p}(pA_{p})+J_{S_{p}}$$
as $f_{p}(pA_{p})+J_{S_{p}}$ is the maximal ideal of $f_{p}(A_{p})+J_{S_{p}}$, proving the claim.\\
By (1),  $(f(A)+J)_L$ is arithmetical. Consequently,  $f(A)+J$ is (locally) arithmetical and so is $g(A)+J'$ via similar arguments.

Conversely, assume  $f(A)+J$ and $g(A)+J'$ are arithmetical and, $\forall\m\in\Max(A,I_{o})$, $J_{S_{\m}}$ or $J'_{S'_{\m}}$ is null. Let $\M\in\Max(A\bowtie^{f,g}(J,J'))$. Suppose that $J\times J'\nsubseteq \M$. By \cite[Proposition 5.3(2)]{KLT}, there is $L$, say, in $\Spec(f(A)+J)$ such that
$$(A\bowtie^{f,g}(J,J'))_{\M}\cong (f(A)+J)_L.$$
So, in this case, $(A\bowtie^{f,g}(J,J'))_{\M}$ is obviously arithmetical. Next, suppose $J\times J'\subseteq \M$. By \cite[Proposition 5.3(1) \& Lemma 5.1]{KLT}, there is a unique $\m\in\Max(A,I_{o})$ such that
$$\M=\m\bowtie^{f,g}(J,J').$$
By hypothesis we have, say, $J'_{S'_{\m}}=0$. Now, let  $L:= f(\m)+J$, a prime ideal of $f(A)+J$. It follows, via \cite[Proposition 4.1(2)]{KLT} and Claim~\ref{claim1}, that:
$$
\begin{array}{lll}
\big(A\bowtie^{f,g}(J,J')\big)_{\M}         &\cong  &A_{\m}\bowtie^{f_{\m},g_{\m}}(J_{S_{\m}},0)\\
                                            &\cong  &f_{\m}(A_{\m})+J_{S_{\m}}\\
                                            &=      &(f(A)+J)_L.
\end{array}
$$
So, in this case too, $(A\bowtie^{f,g}(J,J'))_{\M}$ is arithmetical. Consequently, $A\bowtie^{f,g}(J,J')$ is arithmetical,  completing the proof of the theorem.
\end{proof}

\begin{remark}\label{arith-r1}
In Proposition 3.2 of \cite{KLT}, it is proved that every bi-amalgamation $A\bowtie^{f,g}(J,J')$ can be viewed as a conductor square with conductor $J\times J'$. Boynton examined the transfer of the arithmetical property to conductor squares in the special case where the conductor ideal is regular \cite[Theorem 3.3]{Bo2007} (and also \cite[Theorem 4.1]{Bo2008}). We cannot appeal to this result in the context of Theorem~\ref{arith} since, under the assumption ``$J\times J'$ is regular,"  the bi-amalgamation $A\bowtie^{f,g}(J,J')$ can never satisfy the arithmetical property (because of the necessary condition: $\forall\m\in\Max(A,I_{o})$, $J_{S_{\m}}=0$ or $J'_{S'_{\m}}=0$). This remark is also valid for the forthcoming Corollary~\ref{weak} (on the weak global dimension) and Corollary~\ref{semi} (on the semi-hereditary property).
\end{remark}

\begin{remark}\label{arith-r2}
Observe that Theorem~\ref{arith}(1) can also read as follows: \emph{$A\bowtie^{f,g}(J,J')$ is a chained ring if and only if ``$J=0$ and $g(A)+J'$ is a chained ring" or ``$J'=0$ and $f(A)+J$ is a chained ring}" which is obvious from the facts that the chained ring notion is stable under factor ring and $\frac{f(A)+J}{J}\cong \frac{g(A)+J'}{J'}$ \cite[Proposition 4.1(3)]{KLT}).

\end{remark}

As an illustrative example for Theorem~\ref{arith}, Example~\ref{arith-ex1} provides an original arithmetical ring which arises as a bi-amalgamation.

Recall that the amalgamation of $A$ with $B$ along $J$ with respect to $f$ is given by
$$A\bowtie^{f} J:=\big\{(a,f(a)+j)\mid a\in A, j\in J\big\}.$$
Clearly, every amalgamation can be viewed as a special bi-amalgamation, since $A\bowtie^fJ= A\bowtie^{{\id_{A}}, f}(f^{-1}(J),J)$.
Accordingly, Theorem~\ref{arith} covers the special case of amalgamations, as recorded below.

\begin{corollary}\label{arith-c1}
Under the above notation, we have:
\begin{enumerate}
\item $A\bowtie^{f}J$ is a chained ring if and only if both $A$ and $f(A)+J$ are chained rings and $J=0$ or $f^{-1}(J)=0$.
\item   $A\bowtie^{f}J$ is arithmetical if and only if both  $A$ and $f(A)+J$ are arithmetical and, for every $\m\in\Max(A,f^{-1}(J))$, $J_{S_{\m}}=0$ or $f_{\m}^{-1}\left(J_{S_{\m}}\right)=0$.
\end{enumerate}
\end{corollary}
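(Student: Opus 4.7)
The plan is to deduce both parts of the corollary by simply specializing Theorem~\ref{arith} to the bi-amalgamation presentation $A\bowtie^f J = A\bowtie^{\id_A,\,f}\bigl(f^{-1}(J),\,J\bigr)$ recalled just before the statement. First I would record the translations this substitution forces: the compatibility requirement of the theorem reads $I_o = \id_A^{-1}(f^{-1}(J)) = f^{-1}(J)$, which is automatic; the subring ``$f(A)+J$'' of the theorem becomes $\id_A(A)+f^{-1}(J)=A$, while ``$g(A)+J'$'' of the theorem becomes $f(A)+J$ in the corollary's notation; and $\Max(A,I_o)$ becomes $\Max(A,f^{-1}(J))$. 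With these translations, part~(1) is immediate from Theorem~\ref{arith}(1): the requirement that both intermediate subrings be chained becomes ``$A$ and $f(A)+J$ are chained'', and the condition ``$J=0$ or $J'=0$'' becomes ``$f^{-1}(J)=0$ or $J=0$''.

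For part~(2), the only step requiring actual verification is the translation of the vanishing condition ``$J_{S_\m}=0$ or $J'_{S'_\m}=0$'' from the theorem. Fix $\m\in\Max(A,f^{-1}(J))$. In the theorem's notation, specialized to our situation, the two multiplicative subsets are $T_\m:=\id_A(A\setminus\m)+f^{-1}(J)\subseteq A$ and $T'_\m:=f(A\setminus\m)+J\subseteq B$. Since $f^{-1}(J)\subseteq\m$, I would check at once that $T_\m=A\setminus\m$, so localization at $T_\m$ reduces to localization at $\m$; hence the first alternative ``$J_{T_\m}=0$'' of the theorem reads $(f^{-1}(J))_\m=0$. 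Applying the routine identity $(f^{-1}(J))_\m \cong f_\m^{-1}(J_{S_\m})$, where $S_\m := f(A\setminus\m)+J$ is the corollary's multiplicative set (which is exactly $T'_\m$), this becomes ``$f_\m^{-1}(J_{S_\m}) = 0$''. Meanwhile the second alternative ``$J'_{T'_\m}=0$'' is literally ``$J_{S_\m}=0$'' in the corollary's notation. Feeding these two translations into Theorem~\ref{arith}(2) gives the stated biconditional.

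The only (mild) obstacle will be justifying the preimage-localization identity $(f^{-1}(J))_\m \cong f_\m^{-1}(J_{S_\m})$. This is standard: one direction is clear since $f(A\setminus\m)\subseteq S_\m$, and the other follows from writing a typical element of $S_\m$ as $f(v)+j$ with $v\in A\setminus\m$ and $j\in J$, noting that $\bigl(f(v)+j\bigr)f(a)\in J$ forces $f(va)\in J$. Thus the entire argument is a bookkeeping reduction to Theorem~\ref{arith}, with no separate computation needed.
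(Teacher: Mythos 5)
Your proposal is correct and is essentially the paper's own argument: the paper offers no separate proof of Corollary~\ref{arith-c1}, deriving it exactly as you do by specializing Theorem~\ref{arith} to $A\bowtie^{f}J=A\bowtie^{\id_{A},f}(f^{-1}(J),J)$. The one identity you verify, $(f^{-1}(J))_{\m}\cong f_{\m}^{-1}(J_{S_{\m}})$, is already recorded in the paper's setup as $f_p^{-1}(J_{S_{p}})=g_p^{-1}(J'_{S'_{p}})=(I_{o})_p$, so your bookkeeping matches the intended reading precisely.
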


\begin{remark}\label{arith-r2-1}
Recently, Finocchiaro proved the following result for the transfer of the arithmetical property to amalgamations: "\emph{Assume that, for each $\m\in\Max(A, f^{-1}(J))$, either $f_{\m}$ is surjective or $f_{\m}^{-1}\left(J_{S_{\m}}\right)\not=0$. Then, $A\bowtie^{f}J$ is arithmetical if and only if $A$ is arithmetical, $J_{S_{\m}}=0$ for each  $\m\in\Max(A,f^{-1}(J))$, and for any $\m'\in\Max(B)$ not containing $J$, the ideals of $B_{\m'}$ are totally ordered by inclusion}" \cite[Proposition 4.10]{Fin}. Corollary~\ref{arith-c1} covers this result due to the basic fact that if $f_{\m}$ is surjective and $J_{S_{\m}}\not=0$ then $f_{\m}^{-1}\left(J_{S_{\m}}\right)\not=0$; combined with the two-type maximal ideal structure of $A\bowtie^{f}J$ (cf.  \cite[Proposition 2.6]{DFF2} and \cite[Proposition 2.5]{Fin}); precisely,  $A_{\m}\cong (A\bowtie^{f}J)_{\m\bowtie^{f}J}$ when $J_{S_{\m}}=0$ and $B_{\m'}\cong (A\bowtie^{f}J)_{\overline{\m'}}$ where $\overline{\m'}=\big\{(a, f(a)+j)\in A\bowtie^{f}J \mid f(a)+j\in \m'\big\}$.
\end{remark}

\begin{remark}\label{arith-r3}
Assume $J\not=0$. Then Remark~\ref{arith-r2} combined with \cite[Corollary 4.6]{KLT} yield: \emph{$A\bowtie^{f}J$ is a chained ring (resp., valuation domain) if and only if $f^{-1}(J)=0$ and $f(A)+J$ is a chained ring (resp., valuation domain)}.
\end{remark}

For an original example of arithmetical ring arising as an amalgamation, see Example~\ref{arith-c1-ex1}. Next, let $I$ be a \emph{proper} ideal of $A$.  The (amalgamated) duplication of $A$ along $I$ is a special amalgamation given by $$A\bowtie I:=A\bowtie^{\id_{A}} I=\big\{(a,a+i)\mid a\in A, i\in I\big\}.$$
The above corollary recovers known results on the transfer of the arithmetical property to duplications, as shown below.

\begin{corollary}[{\cite[Theorem 3.2(1) \& Corollary 3.8(1)]{CJKM}}]\label{arith-c2}\label{arith-c2}
We have:
\begin{enumerate}
  \item $A\bowtie I$  is a chained ring if and only if $A$ is a chained ring and $I=0$.
  \item $A\bowtie I$  is arithmetical if and only if $A$ is arithmetical and $I_{\m}=0$, $\forall\m\in\Max(A,I)$.
\end{enumerate}
\end{corollary}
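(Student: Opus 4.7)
My plan is to derive Corollary~\ref{arith-c2} as a direct specialization of Corollary~\ref{arith-c1} by taking $f=\id_{A}$, $B=A$, and $J=I$. The task is simply to unwind how each ingredient in Corollary~\ref{arith-c1} collapses under this substitution; no further ring-theoretic argument is needed.

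First I would observe the two trivial simplifications that drive everything: with $f=\id_{A}$ we have $f(A)+I=A$ and $f^{-1}(I)=I$. So every hypothesis in Corollary~\ref{arith-c1} referring to $f(A)+J$ becomes a hypothesis on $A$ (which is free once $A$ itself has the asserted property), and every preimage $f^{-1}(J)$ becomes $I$.

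For part (1), Corollary~\ref{arith-c1}(1) requires that $A$ and $f(A)+I=A$ both be chained rings and that $J=0$ or $f^{-1}(J)=0$; both disjuncts become $I=0$, so the condition collapses exactly to ``$A$ is chained and $I=0$.'' For part (2), I would specialize the localization data. For $\m\in\Max(A,I)$, the set $S_{\m}=(A-\m)+I$ coincides with $A-\m$: since $I\subseteq\m$, any $s+i$ with $s\in A-\m$ and $i\in I$ still lies outside $\m$ (otherwise $s=(s+i)-i\in\m$), while the reverse inclusion is immediate. Hence $I_{S_{\m}}=I_{\m}$. Moreover $f_{\m}=\id_{A_{\m}}$, so $f_{\m}^{-1}(I_{S_{\m}})=I_{S_{\m}}$, and the disjunction ``$J_{S_{\m}}=0$ or $f_{\m}^{-1}(J_{S_{\m}})=0$'' in Corollary~\ref{arith-c1}(2) reduces to the single condition $I_{\m}=0$. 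Combined with $f(A)+I=A$, the statement of Corollary~\ref{arith-c1}(2) becomes precisely ``$A$ is arithmetical and $I_{\m}=0$ for every $\m\in\Max(A,I)$.''

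There is essentially no obstacle here; this is a purely formal specialization. The only calculation worth spelling out is the identification $S_{\m}=A-\m$ under the assumption $I\subseteq\m$, which I would include as a one-line verification before invoking Corollary~\ref{arith-c1}.
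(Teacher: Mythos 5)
Your specialization is correct and is exactly how the paper intends Corollary~\ref{arith-c2} to follow: it is the case $f=\id_{A}$, $B=A$, $J=I$ of Corollary~\ref{arith-c1}, with the observation that $S_{\m}=A-\m$ (hence $J_{S_{\m}}=I_{\m}$ and $f_{\m}^{-1}(J_{S_{\m}})=I_{\m}$) collapsing the disjunction to the single condition $I_{\m}=0$. No discrepancy with the paper's (implicit) argument.
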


As another application of Theorem~\ref{arith}, we get necessary and sufficient conditions for a bi-amalgamation to have weak global dimension at most 1. For this purpose, let $\nil(R)$ denote the nilradical of a ring $R$.

\begin{corollary}\label{weak}
Assume $\wdim(f(A)+J)\leq 1$, $\wdim(g(A)+J')\leq 1$, $J\cap \nil(B)=0$, $J'\cap \nil(C)=0$ and, $\forall\m\in\Max(A,I_{o})$, $J_{S_{\m}}=0$ or $J'_{S'_{\m}}=0$. Then $\wdim\left(A\bowtie^{f,g}(J,J')\right)\leq 1$. The converse holds if $I_{o}$ is radical.
\end{corollary}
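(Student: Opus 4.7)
The plan is to reduce the statement to Theorem~\ref{arith} via the classical characterization: for any ring $R$, $\wdim(R)\leq 1$ if and only if $R$ is arithmetical and reduced. Under this lens, both the hypotheses and the conclusion split into an arithmeticity part---already handled by Theorem~\ref{arith}(2)---and a reducedness part, which must be analyzed in terms of the nilradicals of $B$, $C$, $f(A)+J$, $g(A)+J'$, and $A\bowtie^{f,g}(J,J')$. Throughout, I would repeatedly use that a subring inherits reducedness from the ambient ring and that nilpotency in $B\times C$ amounts to coordinatewise nilpotency.

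For the forward implication, the $\wdim\leq 1$ hypothesis on $f(A)+J$ and $g(A)+J'$ forces both to be arithmetical (and reduced). Combined with the maximal ideal condition, Theorem~\ref{arith}(2) yields that $A\bowtie^{f,g}(J,J')$ is arithmetical. To prove it is reduced, take a nilpotent element $(f(a)+j,g(a)+j')$; each coordinate is nilpotent in the respective ambient ring $B$ or $C$, hence nilpotent in the subring $f(A)+J$ or $g(A)+J'$; since these are reduced, both coordinates vanish. The classical characterization then gives $\wdim\bigl(A\bowtie^{f,g}(J,J')\bigr)\leq 1$. Note that for this direction the hypotheses $J\cap\nil(B)=0$ and $J'\cap\nil(C)=0$ are in fact automatic consequences of the weak-dimension hypothesis on the subrings.

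For the converse, assume $\wdim\bigl(A\bowtie^{f,g}(J,J')\bigr)\leq 1$ and $I_{o}$ is radical, so the bi-amalgamation is arithmetical and reduced. Theorem~\ref{arith}(2) directly delivers the arithmeticity of $f(A)+J$ and $g(A)+J'$ together with the maximal ideal condition. Reducedness of the bi-amalgamation instantly gives $J\cap\nil(B)=0$: any $j\in J\cap\nil(B)$ produces the nilpotent element $(j,0)\in A\bowtie^{f,g}(J,J')$, which must vanish; symmetrically $J'\cap\nil(C)=0$. It remains to upgrade arithmeticity of $f(A)+J$ (and of $g(A)+J'$) to weak dimension at most one. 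The main step, and the expected main obstacle, is to show $f(A)+J$ is reduced: given $x=f(a)+j\in f(A)+J$ nilpotent in $B$, expanding $(f(a)+j)^{n}=0$ via the binomial theorem places $f(a)^{n}$ in $J$ (every other term in the expansion carries a factor of $j\in J$), whence $a^{n}\in f^{-1}(J)=I_{o}$; radicality of $I_{o}$ then gives $a\in I_{o}$, so $f(a)\in J$ and therefore $x\in J\cap\nil(B)=0$. An analogous argument handles $g(A)+J'$, and the classical characterization closes the proof.
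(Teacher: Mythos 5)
Your proof is correct and follows essentially the same route as the paper: decompose $\wdim(R)\leq 1$ into ``arithmetical and reduced'' via \cite[Theorem 3.5]{BG}, invoke Theorem~\ref{arith}(2) for the arithmetical part, and treat reducedness separately. The only difference is that where the paper cites \cite[Proposition 4.7]{KLT} for the transfer of reducedness, you reprove it directly with the coordinatewise-nilpotency and binomial arguments (all of which check out, including the use of radicality of $I_{o}$ in the converse), and your observation that $J\cap\nil(B)=0$ and $J'\cap\nil(C)=0$ are redundant in the forward direction is accurate.
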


\begin{proof}
Recall that a ring $R$ has weak global dimension at most 1 if and only if $R$ is arithmetical and reduced \cite[Theorem 3.5]{BG}. A combination of this fact with Theorem~\ref{arith} and \cite[Proposition 4.7]{KLT} (on the transfer of the reduced property) leads to the conclusion.
\end{proof}

The converse of Corollary~\ref{weak} is not true in general. A counter-example in the special case of amalgamations is given in Example~\ref{weak-ex1}. Also, as an illustrative example for this result, Example~\ref{weak-ex2} features an original example of a ring with weak global dimension $\leq 1$ which arises as a bi-amalgamation.

For the special case of amalgamations, we get a more well-rounded result:

\begin{corollary}\label{arith-c3}
Under the above notation, we have:\\
 $\wdim(A\bowtie^{f}J)\leq 1$ if and only if $\wdim(A)\leq 1$, $f(A)+J$ is arithmetical (resp., Gaussian), $J\cap \nil(B)=0$ and, for every $\m\in\Max(A,f^{-1}(J))$, $J_{S_{\m}}=0$ or $f_{\m}^{-1}\left(J_{S_{\m}}\right)=0$.
\end{corollary}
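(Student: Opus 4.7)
The plan is to apply the Bazzoni--Glaz equivalence, namely $\wdim(R)\leq 1 \iff R$ is arithmetical and reduced $\iff R$ is Gaussian and reduced \cite[Theorem 3.5]{BG}, which reduces the problem to two independent transfer questions on $A\bowtie^f J$: when it is arithmetical (or Gaussian), and when it is reduced.

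For the arithmetical part, Corollary~\ref{arith-c1}(2) already supplies the complete characterization: $A$ and $f(A)+J$ are arithmetical, together with the local vanishing condition that, for every $\m\in\Max(A,f^{-1}(J))$, either $J_{S_{\m}}=0$ or $f_\m^{-1}(J_{S_\m})=0$. For the reduced part, I would verify directly the amalgamation instance of \cite[Proposition 4.7]{KLT}: $A\bowtie^f J$ is reduced if and only if $A$ is reduced and $J\cap\nil(B)=0$. The key observation is that $(a, f(a)+j)^n = (a^n, (f(a)+j)^n)$, so any nilpotent of $A\bowtie^f J$ has nilpotent first coordinate in $A$; when $A$ is reduced this forces $a=0$, whence $(f(a)+j)^n = j^n = 0$ in $B$ and $j \in J\cap\nil(B) = 0$. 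The necessity of both conditions is immediate, via projection onto $A$ and via $(0,j)$ being a nilpotent element of $A\bowtie^f J$ for every $j\in J\cap\nil(B)$.

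Combining these two transfer results with the fact that $A$ arithmetical plus $A$ reduced is equivalent to $\wdim(A)\leq 1$ yields the claimed characterization with ``$f(A)+J$ arithmetical.'' For the parenthetical ``Gaussian'' variant, the same argument runs with the Gaussian analog of Corollary~\ref{arith-c1}(2) (obtained by the same pullback/localization machinery underlying the proof of Theorem~\ref{arith}, since Gaussianness, like the arithmetical property, is stable under localization and transfers through the pullback $D$) in combination with the second half of Bazzoni--Glaz. The main substantive ingredient, Corollary~\ref{arith-c1}(2), is already established, and the reduced transfer above is a short direct computation; therefore no new conceptual obstacle is anticipated. The only mildly delicate point is confirming that the Gaussian transfer produces exactly the same local vanishing condition on maximal ideals of $A$ containing $f^{-1}(J)$, which follows the same localization pattern used in Theorem~\ref{arith}(1) and Claim~\ref{claim1}.
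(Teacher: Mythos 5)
Your treatment of the arithmetical version is correct and is essentially the paper's own proof: the paper likewise splits $\wdim\leq 1$ into ``arithmetical'' plus ``reduced'' via \cite[Theorems 3.5 \& 4.8]{BG}, quotes Corollary~\ref{arith-c1}(2) for the first half, and cites \cite[Corollary 4.9]{KLT} for the reduced transfer that you instead verify by hand. Your computation there is fine; note only that reducedness of $A$ is best deduced from the embedding $a\mapsto(a,f(a))$ rather than from the projection, since a quotient of a reduced ring need not be reduced.

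The gap is in the Gaussian variant. There is no ``Gaussian analog of Corollary~\ref{arith-c1}(2) with exactly the same local vanishing condition'': the statement ``$A\bowtie^{f}J$ is Gaussian if and only if $A$ and $f(A)+J$ are Gaussian and, for every $\m\in\Max(A,f^{-1}(J))$, $J_{S_{\m}}=0$ or $f_{\m}^{-1}(J_{S_{\m}})=0$'' is false. Take $A=k[x]/(x^{2})$ and the duplication $A\bowtie I$ with $I=(x)$: then $A\bowtie I\cong k[u,v]/(u,v)^{2}$ is local with maximal ideal of square zero, hence Gaussian (for $a,b$ in the maximal ideal one has $(a,b)^{2}=0=(a^{2})$, $ab=0$ and $b^{2}=0$, so the local Gaussian criterion holds), while $I_{\m}=I\neq 0$ and $f_{\m}^{-1}(I_{\m})=I\neq 0$. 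So a Gaussian amalgamation need not satisfy the vanishing condition; this is precisely why the arithmetical and Gaussian characterizations of duplications in \cite{CJKM} differ. The localization machinery of Theorem~\ref{arith} does not carry over verbatim, because its local step rests on the chained-ring dichotomy of Theorem~\ref{arith}(1), which has no Gaussian counterpart. The Gaussian variant must instead be argued asymmetrically: for the forward implication, $\wdim(A\bowtie^{f}J)\leq 1$ yields the arithmetical conclusion and arithmetical implies Gaussian; for the converse, one checks that under the vanishing hypothesis every localization of $A\bowtie^{f}J$ at a maximal ideal is isomorphic to a localization of $A$ or of $f(A)+J$ (using \cite[Proposition 5.3]{KLT} and Claim~\ref{claim1}), hence is Gaussian, and is moreover reduced because $A\bowtie^{f}J$ itself is reduced by your computation; then \cite[Theorem 4.8]{BG} applies locally. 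In other words, reducedness cannot be decoupled from the Gaussian transfer the way you decouple it from the arithmetical one.
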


\begin{proof}
Combine Corollary~\ref{arith-c1}(2) and \cite[Corollary 4.9]{KLT} with the well-known facts that $\wdim(R)\leq 1$ if and only if $R$ is reduced and arithmetical (resp., reduced and Gaussian) \cite[Theorems 3.5 \& 4.8]{BG}.
\end{proof}

Remark~\ref{arith-r2-1} is also valid for Corollary~\ref{arith-c3} and \cite[Proposition 4.11]{Fin} on the weak global dimension. See Example~\ref{weak-ex1} where  $\wdim(A\bowtie^{f}J)\leq 1$ and $\wdim(f(A)+J)> 1$. Corollary~\ref{arith-c3} recovers a known result for duplications:

\begin{corollary}[{\cite[Theorem 4.1(1)]{CJKM}}]\label{arith-c3-1}
We have:\\
$\wdim(A\bowtie I)\leq 1$ if and only if $\wdim(A)\leq 1$ and $I_{\m}=0,\ \forall\ \m\in\Max(A,I)$.
\end{corollary}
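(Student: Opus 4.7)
The plan is to derive this corollary as the duplication specialization of the amalgamation result Corollary~\ref{arith-c3}. Recall that $A\bowtie I = A\bowtie^{\mathrm{id}_A} I$, so in the notation of Corollary~\ref{arith-c3} we have $B=A$, $f=\mathrm{id}_A$, $J=I$, and $f^{-1}(J)=I$. I will plug these into the amalgamation statement and simplify each of its four hypotheses.

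First, I would observe that $f(A)+J=A$, so the condition ``$f(A)+J$ is arithmetical'' is automatic as soon as $\wdim(A)\leq 1$. Second, for any $\m\in\Max(A,I)$, since $I\subseteq\m$, every element $s+i$ with $s\in A\setminus\m$ and $i\in I$ satisfies $s+i\equiv s\pmod{\m}$, hence lies outside $\m$. Combined with the trivial inclusion $A\setminus\m\subseteq S_{\m}=(A\setminus\m)+I$, this gives $S_{\m}=A\setminus\m$. Consequently $A_{S_{\m}}=A_{\m}$, the map $f_{\m}:A_{\m}\to A_{S_{\m}}$ is the identity, and $J_{S_{\m}}=I_{\m}$, so both alternatives ``$J_{S_{\m}}=0$'' and ``$f_{\m}^{-1}(J_{S_{\m}})=0$'' collapse to the single condition $I_{\m}=0$.

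Third, I would handle the nilradical hypothesis $J\cap\mathrm{nil}(B)=I\cap\mathrm{nil}(A)=0$. In the forward direction this plays no role in the conclusion we want; in the reverse direction it is automatic, because $\wdim(A)\leq 1$ forces $A$ to be reduced (so $\mathrm{nil}(A)=0$). With these three simplifications the hypothesis list of Corollary~\ref{arith-c3} reduces exactly to ``$\wdim(A)\leq 1$ and $I_{\m}=0$ for every $\m\in\Max(A,I)$''.

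The forward implication then follows by invoking Corollary~\ref{arith-c3} and reading off $\wdim(A)\leq 1$ and $I_{\m}=0$ from its necessary conditions (together with the simplifications above). The converse is obtained by verifying each of the four hypotheses of Corollary~\ref{arith-c3} from $\wdim(A)\leq 1$ and $I_{\m}=0$, using the simplifications. There is no real obstacle: the only subtlety is the small identification $S_{\m}=A\setminus\m$, which is the reason the two-clause disjunction of Corollary~\ref{arith-c3} becomes a single clause in the duplication case.
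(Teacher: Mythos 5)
Your proposal is correct and follows exactly the route the paper intends: it presents Corollary~\ref{arith-c3-1} as an immediate specialization of Corollary~\ref{arith-c3} to $B=A$, $f=\id_A$, $J=I$, and your simplifications ($f(A)+J=A$, $S_{\m}=A\setminus\m$ so the disjunction collapses to $I_{\m}=0$, and reducedness of $A$ absorbing the nilradical hypothesis) are precisely the routine verifications the paper leaves implicit.
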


 A ring $R$ is semihereditary if and only if $R$ is coherent and $\wdim(R)\leq 1$ \cite[Theorem 3.3]{BG}. A combination of this fact with Corollary~\ref{weak} and \cite[Proposition 4.2]{KLT} establishes the transfer of the semihereditary property to bi-amalgamations in the special case of Noetherian settings.

\begin{corollary}\label{semi}
Assume that $f(A)+J$ and $g(A)+J'$ are Noetherian semihereditary rings, $J\cap \nil(B)=0$, $J'\cap \nil(C)=0$ and, $\forall\m\in\Max(A,I_{o})$, $J_{S_{\m}}=0$ or $J'_{S'_{\m}}=0$. Then, $A\bowtie^{f,g}(J,J')$ is a Noetherian semihereditary ring. The converse holds if $I_{o}$ is radical.
\end{corollary}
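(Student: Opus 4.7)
The plan is to reduce the semihereditary property to the conjunction ``Noetherian $+$ weak global dimension $\leq 1$'' and then invoke the two available black boxes: Corollary~\ref{weak} for the homological dimension part, and \cite[Proposition 4.2]{KLT} for the Noetherian part. Since a coherent ring of weak global dimension at most $1$ is semihereditary by \cite[Theorem 3.3]{BG}, and any Noetherian ring is automatically coherent, for a Noetherian ring $R$ the semihereditary property is equivalent to $\wdim(R)\leq 1$.

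For the forward direction, assume the listed hypotheses. Since $f(A)+J$ and $g(A)+J'$ are Noetherian semihereditary, both have weak global dimension at most $1$; combined with the hypotheses on $J\cap\nil(B)$, $J'\cap\nil(C)$, and the local vanishing of $J_{S_{\m}}$ or $J'_{S'_{\m}}$ at each $\m\in\Max(A,I_{o})$, Corollary~\ref{weak} yields $\wdim\!\bigl(A\bowtie^{f,g}(J,J')\bigr)\leq 1$. Next, \cite[Proposition 4.2]{KLT} transfers the Noetherian property from $f(A)+J$ and $g(A)+J'$ to $A\bowtie^{f,g}(J,J')$. Being Noetherian, $A\bowtie^{f,g}(J,J')$ is coherent, so by the characterization of \cite[Theorem 3.3]{BG} it is semihereditary.

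For the converse, assume $I_{o}$ is radical and that $A\bowtie^{f,g}(J,J')$ is a Noetherian semihereditary ring. Then $\wdim\!\bigl(A\bowtie^{f,g}(J,J')\bigr)\leq 1$, so the converse part of Corollary~\ref{weak} (which requires $I_{o}$ radical) gives $\wdim(f(A)+J)\leq 1$, $\wdim(g(A)+J')\leq 1$, together with $J\cap\nil(B)=0$, $J'\cap\nil(C)=0$, and the local vanishing condition at each $\m\in\Max(A,I_{o})$. Moreover, the converse direction of \cite[Proposition 4.2]{KLT} transfers the Noetherian property back to $f(A)+J$ and $g(A)+J'$. A Noetherian ring of weak global dimension at most $1$ is semihereditary, so both $f(A)+J$ and $g(A)+J'$ are Noetherian semihereditary, as required.

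The only delicate point is bookkeeping of hypotheses: one must be sure that \cite[Proposition 4.2]{KLT} provides a genuine equivalence for the Noetherian property of the bi-amalgamation in terms of that of $f(A)+J$ and $g(A)+J'$, and that the radical assumption on $I_{o}$ is precisely the additional input needed to reverse Corollary~\ref{weak}. Everything else is formal once the coherence/Noetherian reduction has been made.
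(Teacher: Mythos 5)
Your proposal is correct and follows essentially the same route as the paper: the paper likewise combines the characterization ``semihereditary $=$ coherent $+$ $\wdim\leq 1$'' from \cite[Theorem 3.3]{BG} with Corollary~\ref{weak} for the weak-dimension part and \cite[Proposition 4.2]{KLT} (which is indeed an equivalence for the Noetherian property) for the coherence part. Your write-up is in fact more detailed than the paper's one-line justification, and the bookkeeping you flag as delicate checks out.
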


For the special case of amalgamations, we have a more well-rounded general result. For this purpose, we first recall the following result which examines the transfer of coherence to amalgamations.

\begin{lemma}[{\cite[Theorem 2.2]{AM}}]\label{coh}
Suppose that $f^{-1}(J)$ and $J$ are finitely generated in $A$ and $f(A)+J$, respectively. Then:  $A\bowtie^{f}J$ is coherent if and only if $A$ and $f(A)+J$ are coherent.
\end{lemma}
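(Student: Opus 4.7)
The plan is to exploit the two canonical short exact sequences of $R$-modules (where $R:=A\bowtie^f J$) coming from \cite[Proposition 4.1(2)]{KLT}:
$$0\longrightarrow \{0\}\times J\longrightarrow R\longrightarrow A\longrightarrow 0,\qquad 0\longrightarrow f^{-1}(J)\times\{0\}\longrightarrow R\longrightarrow f(A)+J\longrightarrow 0,$$
with quotient maps $(a,f(a)+j)\mapsto a$ and $(a,f(a)+j)\mapsto f(a)+j$. Under the standing hypotheses, both kernels are finitely generated ideals of $R$: if $j_1,\ldots,j_k$ generate $J$ in $f(A)+J$ then $\{0\}\times J=\sum_{i=1}^k R\cdot(0,j_i)$, and if $a_1,\ldots,a_\ell$ generate $f^{-1}(J)$ in $A$ then $(a_i,0)\in R$ (since $f(a_i)\in J$) and $f^{-1}(J)\times\{0\}=\sum_{i=1}^\ell R\cdot(a_i,0)$.

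For the forward direction, I would invoke the classical fact that a quotient of a coherent ring by a finitely generated ideal remains coherent. The isomorphisms $A\cong R/(\{0\}\times J)$ and $f(A)+J\cong R/(f^{-1}(J)\times\{0\})$ then immediately deliver coherence of both $A$ and $f(A)+J$ from coherence of $R$.

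For the converse, I would use that a ring is coherent precisely when it is coherent as a module over itself, together with the two-out-of-three property for coherent modules in short exact sequences. Applied to the first displayed sequence, it suffices to show that both $A$ and $\{0\}\times J$ are coherent as $R$-modules. Each is clearly finitely generated over $R$. A finitely generated $R$-submodule of $A$ is really a finitely generated ideal of $A$ (since $R$ acts on $A$ through the first projection), hence finitely presented over $A$ by coherence of $A$; because $A$ is a finitely presented $R$-module (its defining kernel being finitely generated, by the above), a standard change-of-rings argument promotes this to a finite $R$-presentation. Coherence of $\{0\}\times J$ as an $R$-module is argued identically, identifying $\{0\}\times J$ with $J$ as $R$-modules (the action factoring through the second projection), invoking coherence of $f(A)+J$, and combining with finite presentation of $f(A)+J$ over $R$.

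The main obstacle is precisely this change-of-rings step in the converse: promoting an $A$-presentation (or $(f(A)+J)$-presentation) of a finitely generated submodule to a finite $R$-presentation. This is where the hypotheses that $f^{-1}(J)$ and $J$ be finitely generated become indispensable, since they are exactly what guarantees that $A$ and $f(A)+J$ are finitely presented $R$-modules, which is the input needed to preserve finite presentations under the surjections onto $A$ and $f(A)+J$.
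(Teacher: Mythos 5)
The paper offers no proof of this lemma: it is imported verbatim from \cite[Theorem 2.2]{AM}, so there is no internal argument to compare yours against. Your proof is nevertheless correct and complete. The two exact sequences are the right ones (the quotients of $R:=A\bowtie^{f}J$ by $\{0\}\times J$ and by $f^{-1}(J)\times\{0\}$ are $A$ and $f(A)+J$, respectively), and you correctly check that the standing hypotheses make both kernels finitely generated ideals of $R$. The forward direction is then the classical fact that a quotient of a coherent ring by a finitely generated ideal is coherent. For the converse, the two-out-of-three property for coherent modules in a short exact sequence reduces everything to showing that $A$ and $\{0\}\times J$ are coherent $R$-modules, and the change-of-rings step you single out does go through: for a surjection $R\twoheadrightarrow \overline{R}$ with finitely generated kernel, any $\overline{R}$-module finitely presented over $\overline{R}$ is finitely presented over $R$ (present it as the cokernel of $\overline{R}^{\,m}\to\overline{R}^{\,n}$; since $\overline{R}^{\,n}$ is finitely presented over $R$ and the image of $\overline{R}^{\,m}$ is finitely generated over $R$, the cokernel is finitely presented over $R$). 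The identifications of finitely generated $R$-submodules of $A$ (resp.\ of $\{0\}\times J$) with finitely generated ideals of $A$ (resp.\ ideals of $f(A)+J$ contained in $J$) are valid because the $R$-actions factor through the two surjections. So the argument stands as a self-contained proof of the cited result.
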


Contrast this result with \cite[Proposition 4.14]{Fin} on the transfer of coherence to amalgamations. The next result is a combination of this lemma and Corollary~\ref{arith-c3} with the well-known fact that $R$ is semihereditary if and only if $R$ is coherent and $\wdim(R)\leq 1$ \cite[Theorem 3.3]{BG}.

\begin{corollary}\label{semi-c1}
Suppose that $f^{-1}(J)$ and $J$ are finitely generated in  $A$ and $f(A)+J$, respectively. Then:
 $A\bowtie^{f}J$ is semihereditary if and only if $A$ is semihereditary, $f(A)+J$ is coherent  arithmetical (resp., Coherent Gaussian), $J\cap \nil(B)=0$ and, for every $\m\in\Max(A,f^{-1}(J))$, $J_{S_{\m}}=0$ or $f_{\m}^{-1}\left(J_{S_{\m}}\right)=0$.
\end{corollary}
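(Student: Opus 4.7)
The proof plan is essentially an assembly of the tools already in hand: the classical characterization that a ring $R$ is semihereditary if and only if $R$ is coherent and $\wdim(R)\leq 1$ \cite[Theorem 3.3]{BG}, combined with Lemma~\ref{coh} (coherence of amalgamations) and Corollary~\ref{arith-c3} (weak global dimension of amalgamations). Because the finite-generation hypotheses on $f^{-1}(J)$ and $J$ are precisely what Lemma~\ref{coh} requires, all needed premises line up cleanly.

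For the forward implication, I would assume $A\bowtie^{f}J$ is semihereditary. Then it is coherent and has weak global dimension at most $1$. From Lemma~\ref{coh}, I immediately conclude that $A$ and $f(A)+J$ are coherent. From Corollary~\ref{arith-c3}, I read off $\wdim(A)\leq 1$, the arithmetical (equivalently, Gaussian, since reduced + Gaussian gives arithmetical via \cite[Theorem 4.8]{BG}) property of $f(A)+J$, the condition $J\cap\nil(B)=0$, and the localization-at-$\m$ alternative. Since $A$ is coherent with $\wdim(A)\leq 1$, it is semihereditary; since $f(A)+J$ is coherent and arithmetical (resp., Gaussian), it is coherent arithmetical (resp., coherent Gaussian), exactly as stated.

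For the converse, I assume all listed conditions. Coherence of $A$ (since $A$ is semihereditary) together with coherence of $f(A)+J$ (contained in the hypothesis ``coherent arithmetical'' or ``coherent Gaussian'') plus the finite-generation hypotheses feed directly into Lemma~\ref{coh} to yield coherence of $A\bowtie^{f}J$. Simultaneously, $\wdim(A)\leq 1$ (since $A$ is semihereditary), arithmeticity (resp., Gaussianity) of $f(A)+J$, the nilpotence condition $J\cap\nil(B)=0$, and the localization dichotomy supply exactly the hypotheses of Corollary~\ref{arith-c3}, yielding $\wdim(A\bowtie^{f}J)\leq 1$. Invoking \cite[Theorem 3.3]{BG} once more concludes that $A\bowtie^{f}J$ is semihereditary.

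The main obstacle is essentially bookkeeping rather than content: one must verify that the ``resp., coherent Gaussian'' variant is handled by the same reasoning. This follows because \cite[Theorem 4.8]{BG} makes ``reduced and Gaussian'' equivalent to ``reduced and arithmetical'', and the reducedness of $f(A)+J$ is forced by $J\cap\nil(B)=0$ (and the fact that $f(A)+J$ embeds in $B$ modulo the nilradical via the coherence argument), so the two parenthetical variants of Corollary~\ref{arith-c3} transport without change into the present statement. Beyond this verification, no additional calculation is needed.
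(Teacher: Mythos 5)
Your proposal is correct and follows exactly the paper's route: the paper proves this corollary precisely by combining Lemma~\ref{coh}, Corollary~\ref{arith-c3}, and the characterization of semihereditary rings as coherent rings of weak global dimension at most $1$ from \cite[Theorem 3.3]{BG}. Your expansion of both implications, including the observation that the arithmetical/Gaussian variants are interchangeable in the reduced setting, matches the intended argument.
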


See Example~\ref{weak-ex1} where  $A\bowtie^{f}J$ is  semihereditary and $f(A)+J$ is not  semihereditary. Corollary~\ref{semi-c1} recovers a known result for duplications:

\begin{corollary}[{\cite[Theorem 4.1(2)]{CJKM}}]\label{semi-c1-1}
Suppose $I$ is finitely generated. Then:\\
$A\bowtie I$ is semihereditary if and only if $A$ is semihereditary and $I_{\m}=0,\ \forall\ \m\in\Max(A,I)$.
\end{corollary}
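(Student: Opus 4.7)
The plan is to derive Corollary~\ref{semi-c1-1} as an immediate specialization of Corollary~\ref{semi-c1} to the duplication setting, where $f=\id_{A}$, $B=A$, and $J=I$. Under this specialization one has $f^{-1}(J)=I$, $f(A)+J=A$, $S_{\m}=A\setminus\m$, and $J_{S_{\m}}=I_{\m}$; moreover, since $f_{\m}=\id_{A_{\m}}$, we get $f_{\m}^{-1}(J_{S_{\m}})=I_{\m}$. Thus the conjunctive localization condition ``$J_{S_{\m}}=0$ or $f_{\m}^{-1}(J_{S_{\m}})=0$'' in Corollary~\ref{semi-c1} collapses to the single condition $I_{\m}=0$ for every $\m\in\Max(A,I)$.

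For the forward implication, I would assume $A\bowtie I$ is semihereditary; invoking Corollary~\ref{semi-c1} (whose hypothesis that $f^{-1}(J)=I$ and $J=I$ be finitely generated is supplied by the assumption that $I$ is finitely generated), I extract directly that $A$ is semihereditary and, as just observed, that $I_{\m}=0$ for each $\m\in\Max(A,I)$.

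For the converse, I would assume $A$ is semihereditary and $I_{\m}=0$ for every $\m\in\Max(A,I)$, and then verify each hypothesis of Corollary~\ref{semi-c1} in turn. Finite generation of $f^{-1}(J)=I$ and of $J=I$ in $f(A)+J=A$ is immediate. Since $A$ is semihereditary, $A$ is coherent and $\wdim(A)\leq 1$; the latter implies that $A$ is arithmetical, so $f(A)+J=A$ is coherent arithmetical. Also, $\wdim(A)\leq 1$ forces $A$ to be reduced, whence $I\cap\nil(A)=0$. Finally, the localization condition is satisfied by hypothesis. Corollary~\ref{semi-c1} then yields that $A\bowtie I$ is semihereditary.

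There is really no substantive obstacle here; the whole content is the observation that in the duplication case the extra hypotheses appearing in Corollary~\ref{semi-c1} (namely coherence and arithmeticity of $f(A)+J$, and the triviality of $J\cap\nil(B)$) are automatic consequences of $A$ being semihereditary, and that the two disjuncts of the localization condition coincide. The only mild point worth flagging explicitly in the write-up is the use of \cite[Theorem 3.5]{BG} (i.e., $\wdim(R)\leq 1\iff R$ reduced and arithmetical) to clear the reduced/arithmetical hypotheses in one stroke.
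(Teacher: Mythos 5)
Your proposal is correct and follows exactly the route the paper intends: the paper states Corollary~\ref{semi-c1-1} as an immediate specialization of Corollary~\ref{semi-c1} to the duplication $f=\id_{A}$, $J=I$, and your verification that $S_{\m}=A\setminus\m$, that both disjuncts of the localization condition reduce to $I_{\m}=0$, and that the remaining hypotheses (coherent arithmetical, $I\cap\nil(A)=0$) are automatic from $A$ being semihereditary via \cite[Theorems 3.3 \& 3.5]{BG} is precisely the omitted routine check.
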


Contrast this result with \cite[Corollary 4.15]{Fin} on the transfer of the semihereditary property to amalgamations.

\section{Examples}\label{sec:3}

First, as an illustrative example for Theorem~\ref{arith}, we provide a family of non-reduced arithmetical rings which arise as bi-amalgamations.


\begin{example}\label{arith-ex1}
Let $(A, \m)$ be a valuation domain, $K:=\qf(A)$, $E$ a finitely generated $A-$module such that $E_{\m}=0$, and $B:=A\ltimes E$ the trivial ring extension of $A$ by $E$, and $C:= K[[X]]$. Consider the natural injective ring homomorphisms $f: A \hookrightarrow B$ and $g: A \hookrightarrow C$ and let $J:=0\ltimes E$.  We claim that the bi-amalgamation $R:=A\bowtie^{f,g}(J,0)$ is a non-reduced arithmetical ring. Indeed, notice first that $f^{-1}(J)=g^{-1}(0)=0$, $f(A)+J=B$, and $g(A)=A$. Further, $B$ is (local) arithmetical by \cite[Theorem 3.1]{MiKabbourMa}. So, $R$ is a chained ring by Theorem~\ref{arith}. However, $R$ is not reduced by \cite[Proposition 4.7]{KLT} as $J^{2}=0$.
\end{example}

Next, as an illustrative example for Corollary~\ref{arith-c1}, we provide a non-reduced arithmetical ring which arises as an amalgamation.

\begin{example}\label{arith-c1-ex1}
Let $(A, \m)$ be a valuation domain, $E$ a nonzero divisible $A$-module whose submodules are totally ordered by inclusion (e.g., $E:=\qf(A)$), and $B:=A\ltimes E$ the trivial ring extension of $A$ by $E$. Consider the natural injective ring homomorphism $f: A \hookrightarrow B$ and let $J:=0\ltimes E$. Then, the amalgamation $R:=A\bowtie^fJ$ is a non-reduced arithmetical ring. Indeed, $f^{-1}(J)=0$ and $f(A)+J=B$ is a chained ring by \cite[Theorem 4.16]{AW}. So, $R$ is a chained ring by Corollary~\ref{arith-c1} but not reduced by \cite[Proposition 5.4]{DFF1} since $J\cap \nil(B)\neq 0$.
\end{example}

The converses of Corollary~\ref{weak} and Corollary~\ref{semi} are not true in general. A counter-example is given below in the trivial case of an amalgamation where $A\bowtie^{f}J\cong A$ and $f(A)+J=B$.

\begin{example}\label{weak-ex1}
Consider the canonical surjective ring homomorphism $f: \Z \rightarrow \Z/4\Z$ and let $J$ denote the zero ideal of $\Z/4\Z$ . Then, $\Z\bowtie^{f}J\cong \Z$ is a Dedekind domain and $f(\Z)+J=\Z/4\Z$ is not reduced so that $\wdim(\Z/4\Z)>1$.
\end{example}

Next, in order to illustrate Corollary~\ref{weak}, one may use bi-amalgamations to enrich the literature with new examples of non-semihereditary rings with weak global dimension $\leq 1$ from the existing ones.

\begin{example}\label{weak-ex2}
Let $A_{o}$ be a Noetherian ring with $\wdim(A_{o})\leq 1$ and let $I$ be a proper ideal of $A_{o}$ such that $I_{\m}=0,\ \forall\ m\in \Max(A_{o},I)$ $\big(e.g., A_{o}:=\Z/12\Z$ and $I:=4\Z/12\Z$; clearly, $I_{\m_{1}}=0$ and $I_{\m_{2}}=0$, where $\m_{1}:=2\Z/12\Z$ and $\m_{2}:=3\Z/12\Z\big)$. Let $A:=A_{o}\bowtie I$ be the amalgamated duplication of $A_{o}$ along $I$. By Corollary \ref{arith-c3-1}, $\wdim(A)\leq 1$. Let $D$ be any non-coherent ring with $\wdim(D)\leq 1$ (e.g., \cite[Example 4.1]{GS}). Finally, consider the natural ring homomorphisms $f: A \twoheadrightarrow A_{o}$ and $g: A \hookrightarrow A\times D$, and let $J:=I$ and $J':=(I\bowtie I)\times D$. Then, the bi-amalgamation $R:=A\bowtie^{f,g}(J,J')$ is a non-semihereditary ring with weak global dimension $\leq 1$. Indeed, notice first that $f^{-1}(J)=g^{-1}(J')=I\bowtie I$. Then, we have $\wdim(f(A)+J)=\wdim(A_{o})\leq 1$ and $\wdim(g(A)+J')=\wdim((A\times0)+ ((I\bowtie I)\times D))=\wdim(A\times D)=\sup\{\wdim(A),\wdim(D)\}\leq 1$. Moreover, let $\m\bowtie I\in\Max(A,I\bowtie I)$. Necessarily, $\m\in\Max(A_{o},I)$. Therefore, $S_{\m\bowtie I}=f(A - (\m\bowtie I))+J = (A_{o}- \m) + I$ and hence $J_{S_{\m\bowtie I}}=I_{\m}=0$. Now, $A_{o}$ and $D$ are reduced and so are $A$ and $A\times D$. By Corollary~\ref{weak}, $\wdim(R)\leq 1$. Finally, note that $R$ is not coherent (and, a fortiori, not semihereditary) since $\frac{R}{J\times 0}\cong g(A)+J'=A\times D$ is not coherent (as $D$ is not coherent).
\end{example}

Next, as an illustrative example for Corollary~\ref{semi}, we provide a new example of semihereditary ring which arises as a bi-amalgamation.


\begin{example}\label{semi-ex1}
Let $A$ be a Noetherian semihereditary ring and let $I$ be a proper ideal of $A$ such that $I_{\m}=0,\ \forall\ m\in \Max(A,I)$ $\big($e.g., $A:=\Z/12\Z$ and $I:=4\Z/12\Z\big)$. Let $B:=A\bowtie I$ be the amalgamated duplication of $A$ along $I$ and let $D$ be a Noetherian semihereditary ring. Finally, consider the natural injective ring homomorphisms $f: A \hookrightarrow B$ and $g: A \hookrightarrow A\times D$, and let $J:=I\bowtie I=I\times I$ and $J':=I\times D$. Then, the bi-amalgamation $R:=A\bowtie^{f,g}(J,J')$ is a Noetherian semihereditary ring. Indeed, note that $f^{-1}(J)=g^{-1}(J')=I$ and, $\forall\ \m\in\Max(A,I)$, $S_{\m}:=f(A-{\m})+I\times I=\big((A-{\m})+I\big)\times \big((A-{\m})+I\big)$. So that $J_{S_{\m}}=(I\times I)_{S_{\m}}\cong I_m\times I_{\m}=0.$ Moreover, $f(A)+J=A\bowtie A=A\times A$ and $g(A)+J'=A\times 0+I\times D=A\times D$ are Noetherian semihereditary rings (since so are $A$ and $D$). Therefore, Corollary~\ref{semi} leads to the conclusion.
\end{example}



\begin{thebibliography}{99}

\par\bibitem{AJK} J. Abuihlail, M. Jarrar and S. Kabbaj, Commutative rings in which every finitely generated ideal is quasiprojective,
J. Pure Appl. Algebra 215 (2011) 2504--2511.

\par\bibitem{AM} K. Alaoui Ismaili and N. Mahdou, Coherence in amalgamated algebra along an ideal, Bull. Iranian Math. Soc.  41 (3)  (2015) 625--632.

\par\bibitem{AW} D. D. Anderson and M. Winders, Idealization of a module, J. Commut. Algebra 1 (1) (2009) 3--56.






\par\bibitem{BKM} C. Bakkari, S. Kabbaj and N. Mahdou, Trivial extensions defined by Pr\"ufer conditions, J. Pure Appl. Algebra 214 (2010) 53--60.


\par\bibitem{BG} S. Bazzoni and S. Glaz, Pr\"ufer rings, in: J. Brewer, S. Glaz, W. Heinzer, B. Olberding (Eds.), Multiplicative Ideal Theory in Commutative Algebra: A tribute to the work of Robert Gilmer, Springer, New York, 2006, pp. 55--72.

\par\bibitem{BG2} S. Bazzoni and S. Glaz, Gaussian properties of total rings of
quotients, J. Algebra  310 (2007) 180--193.


\par\bibitem{BSh2}   M. Boisen Jr. and P. Sheldon, CPI-extension: overrings of integral domains with special prime spectrum, Canad. J. Math. 29 (1977) 722--737.

\par\bibitem{Bo2007}    J. G. Boynton, Pullbacks of arithmetical rings, Comm. Algebra 35 (2007) 2671--2684.

\par\bibitem{Bo2008}    J. G. Boynton, Pullbacks of Pr\"{u}fer rings, J. Algebra 320 (2008) 2559--2566.

\par\bibitem{Bo2011}   J. G. Boynton, Pr\"ufer conditions and the total quotient ring, Comm. Algebra 39 (5) (2011) 1624--1630.


\par\bibitem{CE} H. Cartan, S. Eilenberg, Homological Algebra, Princeton University Press, 1956.

\par\bibitem{CJKM}    M. Chhiti, M. Jarrar, S. Kabbaj and N. Mahdou, Pr\"ufer conditions in an amalgamated duplication of a ring along an ideal, Comm. Algebra 43 (1) (2015) 249--261.


\par\bibitem{Cou}    F. Couchot, Gaussian trivial ring extensions and fqp-rings, Comm. Algebra  43 (7)  (2015) 2863--2874.


\par\bibitem{D}     M. D'Anna, A construction of Gorenstein rings, J. Algebra  306 (6)  (2006) 507--519.

\par\bibitem{DFF1}  M. D'Anna, C. Finocchiaro and M. Fontana, Amalgamated algebras along an ideal, in: M. Fontana, S. Kabbaj, B. Olberding, I. Swanson (Eds.), Commutative Algebra and its Applications, Walter de Gruyter, Berlin, 2009, pp. 155--172.

\par\bibitem{DFF2}  M. D'Anna, C. Finocchiaro and M. Fontana, Properties of chains of prime ideals in an amalgamated algebra along an ideal, J. Pure Appl. Algebra  214 (9) (2010) 1633--1641.

\par\bibitem{DF1}   M. D'Anna and M. Fontana, An amalgamated duplication of a ring along an ideal: the basic properties, J. Algebra Appl.  6 (3) (2007) 443--459.		

\par\bibitem{DF2}   M. D'Anna and M. Fontana, The amalgamated duplication of a ring along a multiplicative-canonical ideal, Ark. Mat.  45 (2) (2007) 241--252.

\par\bibitem{Fin}   C. Finocchiaro, Pr\"ufer-like conditions on an amalgamated algebra along an ideal, Houston J. Math. 40 (1) (2014) 63--79.

\par\bibitem{Fu} L. Fuchs, Uber die Ideale arithmetischer Ringe, Comment. Math. Helv.  23 (1949) 334--341.




\par\bibitem{G} S. Glaz, Commutative coherent rings, Lecture Notes in Mathematics, 1371, Springer-Verlag, Berlin, 1989.

\par\bibitem{G2}    S. Glaz, The weak global dimension of Gaussian rings, Proc. Amer. Math. Soc.  133 (9) (2005) 2507--2513.

\par\bibitem{G3}    S. Glaz, Pr\"ufer conditions in rings with zero-divisors, CRC Press Series of Lectures in Pure Appl. Math.  241 (2005) 272--282.

\par\bibitem{GS}    S. Glaz and R. Schwarz, Pr\"ufer conditions in commutative rings, Arab. J. Sci. Eng. 36 (2011) 967--983.






\par\bibitem{J}     C. U. Jensen, Arithmetical rings, Acta Math. Hungr.  17 (1966) 115--123.

\par\bibitem{KLT} S. Kabbaj, K. Louartiti, and M. Tamekkante,  Bi-amalgamated algebras along  ideals, J. Commut. Algebra, to appear. $\langle arXiv:1407.7074 \rangle$










\par\bibitem{MY}    H. Maimani and S. Yassemi, Zero-divisor graphs of amalgamated duplication of a ring along an ideal, J. Pure Appl. Algebra  212 (1) (2008) 168--174.

\par\bibitem{MiKabbourMa}    A. Mimouni, M. Kabbour, and N. Mahdou, Trivial ring extensions defined by arithmetical-like properties, Comm. Algebra 41 (2013) 4534--4548.





\par\bibitem{Ped}    C. Pedrini, Incollamenti di ideali primi e gruppi di Picard, Rend. Sem. Math. Univ. Padova 48 (1973) 39--66.

\par\bibitem{Sh}    J. Shapiro, On a construction of Gorenstein rings proposed by M. D'Anna, J. Algebra  323 (4)  (2010) 1155--1158.

\par\bibitem{Tam}    G. Tamone, Sugli incollamenti di ideali primi, Bollettino U.M.I. 14 (1977) 810--825.

\par\bibitem{Tra}    C. Traverso, Seminormality and Picard group, Ann. Sc. Norm. Sup. Pisa 24 (1970) 585--595.


\par\bibitem{Yan}    H. Yanagihara, On glueings of prime ideals, Hiroshima Math. J. 10 (2) (1980) 351--363.

\end{thebibliography}
\end{document}